\newtheorem{proposition}{Proposition}[section]
\newtheorem{definition}[proposition]{Definition}
\newtheorem{lemma}[proposition]{Lemma}
\newtheorem{theorem}[proposition]{Theorem}
\newtheorem{corollary}[proposition]{Corollary}
\newcommand\eps{\epsilon}
\newcommand{\comment}[1]{}
\newcommand{\R}{\mathbb{R}}
\newcommand{\D}{\mathbb{D}}
\newcommand{\Z}{\mathbb{Z}}
\newcommand{\csubset}{\subset \subset}
\newcommand{\mass}{\mathbf{mass}}
\newcommand \restrict[2]{\left. #1\right|_{#2}} 
\newcommand \norm[1]{\parallel #1\parallel} 
\newcommand \lcur{[\![} 
\newcommand\rcur{]\!]}
\newcommand \cur[1]{\lcur #1\rcur}
\newcommand\piinv{\pi^{-1}}
\DeclareMathOperator \length{length}
\DeclareMathOperator \diam{diam}
\DeclareMathOperator \area{area}
\DeclareMathOperator \interior{interior}
\DeclareMathOperator \core{core}
\DeclareMathOperator \crust{crust}
\DeclareMathOperator \support{support}
\begin{document}

\title{Piecewise linear approximation of smooth functions of two variables}
\author{Joseph H.G. Fu and Ryan C. Scott}
\thanks{Fu partially supported by NSF grant DMS-1007580. Scott supported by an NSF VIGRE Postdoctoral Fellowship.}
 \address{ Department of Mathematics, 
University of Georgia, 
Athens, GA 30602, USA}

\date{\today}
\lhead{J.H.G. Fu and R.C. Scott}
\rhead{PL Approximation}
\maketitle
\begin{abstract} Given a piecewise linear (PL) function $p$ defined on an open subset of $\R^n$, one may construct by elementary means a unique polyhedron with multiplicities $\D(p)$ in the cotangent bundle $\R^n\times \R^{n*}$ representing the graph of the differential of $p$. Restricting to dimension 2, we show that any smooth function $f(x,y)$ may be approximated by a sequence $p_1,p_2,\dots$ of PL functions such that the areas of the $\D(p_i)$ are locally dominated by the area of the graph of $df$ times a universal constant.
\end{abstract}
\section{Introduction} 
\subsection{General introduction}
In approximating smooth submanifolds by polyhedra, it is well known that pathologies may occur if the triangles of the polyhedra become too thin. For example, the famous   ``Schwartz lantern" (cf. \cite{morvan}, section 3.1.3) shows that the areas of a sequence of  polyhedra $P_i$ approximating a surface $S$ may fail to converge unless the  ratio of the area of a general constituent triangle to the square of its diameter is bounded away from zero. On the other hand, if this ``fatness" condition is imposed then the sequence enjoys convergence not only to first order  (i.e. convergence of areas),  but also to second order in the sense that  certain curvature integrals  (or, more precisely, their polyhedral analogues) for the $P_i$ converge weakly to the corresponding integrals for $S$ (\cite{cms,fu93}).
The key observation is that the corresponding {\it absolute} curvature integrals of the $P_i$ remain bounded in the course of the approximation process. 

However, there is a further subtlety: even if the fatness condition is imposed, the bound on the absolute curvature integrals  of the $P_i$ depends on the $C^2$ norm of $S$, which may be much larger than its  absolute curvature integral (cf. section \ref{sect:quadratic} below). 

In the present paper we show that by means of a more careful approach, the $P_i$ may be chosen so that their absolute curvature integrals are bounded by universal constant multiples of the corresponding integrals for $S$.
In order to isolate the analytic aspects of the process, instead of a smooth surface $S$ we consider  smooth and piecewise linear (PL) {\it functions} $f$ defined on an open subset $U \subset \R^n$, replacing curvatures of $S$ by invariants of the Hessian of $f$.

\subsection{Statement of the main theorem} The curvature integrals of $S$ that we consider are artifacts of the manifold $N(S) \subset \R^3 \times S^2$ of unit normals to $S$. Using the area formula, it is easy to see  that the area of $N(S)$ is bounded above and below by constant multiples of $\int_S 1 + \norm{II} + |\det II |$, where $II$ is the second fundamental form. By the same token, the area of the graph of the differential $df$ of a $C^2$ function is bounded above and below by constant multiples of $\int 1  + \norm {H_f} + |\det H_f|$, where $H_f$ is the Hessian (cf. \eqref{eq:mass formula}).

For certain classes of singular surfaces $S$ and functions $f$, it is known that there exist integral currents $N(S)$ and $\D(f)$ that serve, respectively, as suitable replacements for the manifold of unit normals and the graph of the differential. We state the relevant theorem in the latter case.

\begin{theorem} [\cite{fu89, jerrard2}]\label{thm:ma}
Let $U \subset \R^n$ be an open set and let $f$ be a function with locally integrable differential $df$ (i.e. $f$ belongs to the Sobolev space $ W^{1,1}_{loc}(U)$). Then there is at most one
integral current $\D(f)$ of dimension $n$ in the cotangent bundle $T^* U$, with the properties
\begin{itemize}
\item $\partial \D(f) = 0$
\item $\D(f)$ is {\bf Lagrangian}, in the sense that $\D(f) \llcorner \omega = 0 $, where $\omega $ is the canonical symplectic form of $T^*U$
\item if $K \csubset U$ then $\mass ( \D(f) \llcorner \pi^{-1}K) <\infty$
\item  for any compactly supported continuous function $\phi:T^*U \to \R$ 
\begin{equation}\label{eq:support}
\D (f)( \phi \, \pi^*(dx_1\wedge\dots\wedge dx_n))= \int_{\R^n} \phi(x, df_x) \, d\mathcal L^nx
\end{equation}
\end{itemize}
\end{theorem}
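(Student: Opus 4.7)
I would prove the uniqueness assertion by showing that the difference $T := T_1 - T_2$ of two integral currents satisfying the listed properties must vanish. The current $T$ is integer multiplicity rectifiable with $\partial T = 0$, $T \llcorner \omega = 0$, $\mass(T \llcorner \piinv K) < \infty$ for each $K \csubset U$, and satisfies the homogeneous projection identity $T(\phi\, \pi^*(dx_1 \wedge \dots \wedge dx_n)) = 0$ for every compactly supported continuous $\phi$ on $T^*U$.

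The first move is a slicing reduction. The local mass bound legitimizes slicing along $\pi$: for $\leb^n$-a.e.\ $x \in U$ the slice $\langle T, \pi, x\rangle$ is a well-defined integer multiplicity rectifiable $0$-current in the fiber $\piinv(x) \cong \R^{n*}$, and the slicing formula yields $T(\phi\, \pi^*(dx_1 \wedge \dots \wedge dx_n)) = \int_U \langle T, \pi, x\rangle(\phi(x, \cdot))\, d\leb^n(x)$. The homogeneous identity then forces $\langle T, \pi, x\rangle = 0$ for $\leb^n$-a.e.\ $x$. Letting $E$ be the rectifiable carrier of $T$, this has the geometric consequence that at $\h^n$-a.e.\ $p \in E$ the approximate tangent plane $T_p E$ either projects to a proper subspace of $\R^n$ or produces cancellation among the signed multiplicities in the fiber; in either case $T$ has no graphical component in the sense of slicing.

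The final step, which I expect to be the main obstacle, is to exploit the Lagrangian condition and closedness to upgrade ``vanishing horizontal slices'' to $T = 0$. The route I would pursue translates the three hypotheses $T \llcorner \omega = 0$, $\partial T = 0$, and the homogeneous projection identity into the assertion that $T$ vanishes on every smooth compactly supported $n$-form, arguing via a Lefschetz-type decomposition on the symplectic vector space $T_p^* U$: modulo $d$ and modulo $\omega \wedge (\cdot)$, every primitive $n$-form should be cohomologous to one of the type $\phi\, \pi^*(dx_1 \wedge \dots \wedge dx_n)$, at which point the homogeneous identity kills $T(\alpha)$ for arbitrary $\alpha$. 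Carrying this reduction through in the non-compact setting of $T^*U$ requires the local mass hypothesis and a careful choice of primitives, which is the technical heart.

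An alternative route I would keep in reserve is mollification: replace $f$ by $f_\eps = f * \rho_\eps$, identify $\D(f_\eps)$ with the unambiguous smooth graph of $df_\eps$, derive mass estimates from the $C^2$ data of $f_\eps$, and use flat-norm continuity under the four axioms to show that any $T$ satisfying them coincides with $\lim_\eps \D(f_\eps)$. This trades the structural ``currents from primitive forms'' argument for bookkeeping on mass and boundary mass bounds, and in either approach the subtle point is the same: ruling out nontrivial closed Lagrangian integer rectifiable currents of locally finite horizontal mass whose slices vanish by cancellation rather than by degeneracy of $\pi|_{T_p E}$.
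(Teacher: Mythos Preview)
The paper does not prove Theorem~\ref{thm:ma}. It is stated with attribution to \cite{fu89, jerrard2} and used as a black box; no argument for it appears anywhere in the text. There is therefore no ``paper's own proof'' to compare your proposal against.

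That said, your outline is broadly in the spirit of the arguments in the cited references. Forming the difference $T=T_1-T_2$, reading the fourth bullet as a vanishing condition on horizontal $n$-forms, and then using closedness together with the Lagrangian constraint $T\llcorner\omega=0$ to kill $T$ on the remaining $n$-forms is exactly the shape of the uniqueness proof in \cite{fu89}; the ``Lefschetz-type'' reduction you describe is essentially how one shows that, modulo $d(\cdot)$ and $\omega\wedge(\cdot)$, every compactly supported $n$-form on $T^*U$ is accounted for by the horizontal ones. Your slicing paragraph is not wrong, but it is somewhat beside the point: the actual argument does not need to analyze the approximate tangent planes of the carrier or worry about ``cancellation versus degeneracy'' in the fibers. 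One works directly at the level of forms, and the local mass hypothesis is used only to justify the integrations by parts against non-compactly-supported primitives in the fiber direction. Your alternative mollification route is not how the cited proofs proceed and would require independent control on $\mass\,\D(f_\eps)$ that is not available for a general $f\in W^{1,1}_{loc}$.
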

Here $\pi:T^*U \to U$ is the projection. If $\D(f)$ exists then we say that $f$ is a {\it Monge-Amp\`ere (MA) function}, and refer to $\D(f)$ as the {\bf gradient cycle} of $f$.


If $f \in C^{1,1}(U)$ (i.e. $f \in C^1$ and its derivative is locally Lipschitz) then $\D(f)$ is given by integration over the graph of the differential $df$, which is a Lipschitz submanifold of $T^*U$ with orientation induced by the standard orientation of $U \subset \R^n$. Thus if $f$ is not smooth then we may think of $\D(f)$ as a substitute for this graph, and its existence is an indication that $f$ has good second order properties. It is not difficult (cf. \cite{jerrard1}) to construct $\D(p)$ directly for a PL function $p$; for completeness, and to fix ideas, we give the construction in detail for $n=2$ in section 
\ref{sect:d of pl} below.
In these terms our main theorem is the following. We denote the space of PL functions on an open set $V$ by $PL(V)$.
\begin{theorem} \label{thm:main} Let $U \subset \R^2$ be open and $f \in C^{1,1}(U)$. Given an open set $V\csubset U$, there exists a sequence $p_i \in PL(V)$, with $p_i \to \restrict f V$ uniformly, such that
 \begin{equation}\label{eq:main}
\limsup_{i\to \infty} \mass (\D(\restrict{p_i}V )) <\int_V 1 + 2 \sqrt 2 \norm {H_f} + \left|\det H_f\right|\, dA
\end{equation}
\end{theorem}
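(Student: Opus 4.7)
The plan is to build $p_i$ as PL interpolants of $f$ on triangulations $\mathcal{T}_i$ whose edges are locally aligned with the eigenframe of $H_f$, and then use the construction of $\D(p)$ promised in section~\ref{sect:d of pl} to bound $\mass(\D(p_i))$ term by term. In that construction $\mass(\D(p))$ is the sum of (a) a base area $\int_V 1\,dA$ (the graph of $\nabla p$ over each triangle), (b) an ``edge fan'' of mass $|e|\cdot|[\nabla p]_e|$ for each interior edge $e$, where $[\nabla p]_e$ denotes the gradient jump, and (c) a ``vertex disk'' over each interior vertex $v$, whose mass is the area (with multiplicity) of the closed polygon in $\R^2$ traced out by the gradients of the triangles around $v$. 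We match these contributions with the summands $1$, $2\sqrt 2\,\norm{H_f}$, and $|\det H_f|$ on the right of \eqref{eq:main}, respectively.

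Since $f\in C^{1,1}(U)$ has $H_f\in L^\infty(V)$, Lusin's theorem lets us delete a set of area $<\eta$ on whose complement $H_f$ is uniformly continuous. Cover the remainder by axis-parallel squares $Q_j$ of side $\delta\to 0$ on which $H_f$ is nearly constant, and let $R_j$ be a rotation taking the standard basis to the eigenframe of the averaged Hessian on $Q_j$. Triangulate $Q_j$ by the $R_j$-rotated standard 45-45-90 mesh at a finer scale $\eps\ll\delta$ and let $p_i$ interpolate $f$ at the mesh vertices. A direct computation shows that in the aligned coordinates where $H_f=\mathrm{diag}(\lambda_1,\lambda_2)$, the jump $[\nabla p_i]_e$ has magnitude $\approx\eps|\lambda_2|$ across horizontal edges, $\approx\eps|\lambda_1|$ across vertical edges, and $O(\eps^2)$ across diagonal edges; summing $|e|\cdot|[\nabla p_i]_e|$ inside $Q_j$ yields $\int_{Q_j}(|\lambda_1|+|\lambda_2|)\,dA+o(1)$. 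At each interior vertex the six surrounding gradients form a hexagon which, using $\nabla p|_T\approx\nabla f(c_T)$, is the image of the vertex's Voronoi cell under $d(\nabla f)$ up to $O(\eps)$ error; a Shoelace check shows the hexagon's area is $|\det H_f|\cdot\eps^2+o(\eps^2)$, summing over $Q_j$ to $\int_{Q_j}|\det H_f|\,dA+o(1)$.

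To assemble a global triangulation of $V$, insert transition strips of width $w=\sqrt{\eps\delta}$ along the boundaries between adjacent squares, triangulated by a graded mesh that smoothly rotates edges from orientation $R_j$ to $R_{j+1}$ without producing thin triangles. These strips have total area $O(\sqrt{\eps/\delta})\cdot|V|\to 0$, and because every triangle has diameter $O(\eps)$ while $\nabla f$ is Lipschitz with constant $\norm{H_f}_\infty$, every gradient jump is $O(\eps\norm{H_f}_\infty)$; thus the strips' total mass contribution vanishes as well. Applying $|\lambda_1|+|\lambda_2|\le\sqrt 2\,\norm{H_f}$ and letting $\eta,\delta,\eps/\delta\to 0$ in turn yields
\[
\limsup_{i\to\infty}\mass(\D(p_i))\ \le\ \int_V 1+\sqrt 2\,\norm{H_f}+|\det H_f|\,dA,
\]
which lies strictly below the right side of \eqref{eq:main} whenever $\int_V\norm{H_f}\,dA>0$; the construction has substantial slack relative to the coefficient $2\sqrt 2$.

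The main obstacle is the construction of the transition strips: bridging adjacent locally-aligned meshes of differing orientations requires a graded triangulation that simultaneously (i) avoids Schwartz-lantern-type thin triangles, which would blow up the vertex-disk contributions, (ii) produces gradient jumps only of the controlled size $O(\eps\norm{H_f}_\infty)$, and (iii) has area vanishing in the limit. The $C^{1,1}$ regularity of $f$ is crucial for the uniform bound in (ii); without it the strips could contribute nontrivially even as their area shrinks.
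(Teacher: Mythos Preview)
Your overall strategy---locally align a fine square mesh with the eigenframe of $H_f$, compute the $D_1$ and $D_0$ contributions on this mesh explicitly, and absorb the gluing regions using the Lipschitz bound on $\nabla f$---is exactly the one the paper follows. The decomposition of $\mass(\D(p))$ into base area, edge-fan, and vertex-polygon pieces, and the matching of these with $1$, $\norm{H_f}$, and $|\det H_f|$, are all carried out in the paper's section~\ref{sect:quadratic} in essentially the way you describe.

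The genuine gap is precisely where you say it is: the transition strips. You assert the existence of a ``graded mesh that smoothly rotates edges from orientation $R_j$ to $R_{j+1}$ without producing thin triangles,'' but you do not construct it, and such a construction is not obvious. Controlling the vertex contributions in the strips requires a uniform bound on the number of triangles meeting at each vertex (equivalently, a lower bound on angles), and you give no mechanism for this. The paper resolves this by \emph{not} attempting to rotate between the two orientations at all: it keeps the aligned meshes well separated from one another and from the boundary of each square, and then invokes a mesh-generation theorem of Chew (Theorem~\ref{thm:chew}) which guarantees that any partial triangulation with edge lengths in $[h,h\sqrt 3]$ can be completed to a triangulation of the ambient polygon with all angles in $[\pi/6,2\pi/3]$. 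This is the nontrivial external ingredient that replaces your black-box strips; once it is in hand, the crude fat-triangulation bound (Proposition~\ref{prop:crude}) controls the filler region, whose area tends to zero.

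A secondary gap: after Lusin you ``delete a set of area $<\eta$,'' but $p_i$ must be defined on all of $V$, and you say nothing about how the deleted set is triangulated or bounded. The paper sidesteps this by first mollifying to reduce to $f\in C^2$, so that $H_f$ is globally uniformly continuous on $\overline V$ and no Lusin step is needed; the $C^{1,1}$ case then follows by dominated convergence.
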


In fact it is enough to prove Theorem \ref{thm:main} in the case that $f \in C^2(U)$: convolving with a mollifier we may approximate any $f \in C^{1,1}(U)$ locally by $C^2$ functions $f_i$, in such a way that the Hessians of the $f_i$ converge pointwise a.e. to that of $f$. Invoking the estimates \eqref{eq:main} for the $f_i$ and using a diagonal process, the desired estimate for $f$ follows from the dominated convergence theorem.

As mentioned above, there is a weaker form of Theorem \ref{thm:main} that holds for any PL approximation in which the underlying triangulations are uniformly fat. This is given here as Proposition \ref{prop:crude}. In this result the masses of the
$\D(\restrict {p_i} V)$ are bounded in terms of the integral of the square of the norm of the Hessian matrix $H_f$.  By the area formula,
\begin{align}\label{eq:mass formula}
\mass~\D(\restrict f V) &= \int_V   \left(1 + \norm{H_f}^2+ (\det H_f)^2\right)^{\frac 1 2}~dx~dy\\
\notag & \simeq \int_V 1 + \norm{H_f} +\left|\det H_f\right|~dx ~dy
\end{align}
for $f \in C^2(U)$, where 
$$
\norm{H_f} = \left[\left(\frac{\partial^2f}{\partial x^2}\right)^2+ \left(\frac{\partial^2f}{\partial y^2}\right)^2 + 2\left( \frac{\partial^2f}{\partial x\partial y}\right)^2 \right]^{\frac 1 2}
$$
is the Hilbert-Schmidt norm of the Hessian. Thus the bound of Theorem \ref{thm:main} is  stronger in the event that $H_f$ has one small and one large eigenvalue over a large region of $U$.

\subsection{Some consequences}
All functions $f$ known to be MA are {\it strongly approximable} in the sense of \cite{fu11}, i.e. there is a sequence $f_j$ of $C^2$ functions that converge to $f$ in $L^1_{loc}$ such that the $\D(f_j)$ are locally uniformly bounded in mass. The differential current $\D(f)$ then arises as the limit of the $\D(f_j)$: the Federer-Fleming compactness theorem for integral currents implies that a subsequence converges, and the uniqueness result from the first paragraph above implies that the limit is uniquely determined. A fundamental conjecture states that every MA function is strongly approximable.

In view of the simple construction of $\D(f)$ for PL functions, it is natural to consider also the class of {\bf PL strongly approximable} functions, which are defined analogously by taking the approximating functions $f_j$ to be PL instead of $C^2$. Again one conjectures that any MA function is PL strongly approximable. Theorem \ref{thm:main} implies that these two notions coincide in dimension $n=2$.
\begin{corollary} Let $U\subset \R^2$ and $f \in W^{1,1}_{loc}(U)$. Then $f$ is strongly approximable iff $f$ is PL strongly approximable.
\end{corollary}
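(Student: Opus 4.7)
The plan is to prove the two implications separately. For the forward direction---strong approximability implies PL strong approximability---I invoke Theorem \ref{thm:main} essentially as a black box. Given $C^2$ functions $f_j \to f$ in $L^1_{loc}$ with $\mass \D(\restrict{f_j}{V})$ locally uniformly bounded, I fix an exhaustion $V_1 \csubset V_2 \csubset \cdots$ of $U$ and apply Theorem \ref{thm:main} to each pair $(f_j, V_k)$ to obtain PL approximations $p_{j,k,i} \to f_j$ uniformly on $V_k$ with
\[
\limsup_i \mass \D(\restrict{p_{j,k,i}}{V_k}) < \int_{V_k} 1 + 2\sqrt 2 \|H_{f_j}\| + |\det H_{f_j}|\, dA.
\]
By \eqref{eq:mass formula} the right side is comparable to $\mass \D(\restrict{f_j}{V_k})$, hence bounded uniformly in $j$ for each fixed $k$. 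A standard diagonal extraction then produces PL functions converging to $f$ in $L^1_{loc}$ with locally uniformly bounded gradient cycle mass.

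For the reverse direction, given PL $p_j \to f$ in $L^1_{loc}$ with $\mass \D(\restrict{p_j}{V})$ locally uniformly bounded, I set $f_j := p_j * \phi_{\epsilon_j}$ for a standard mollifier $\phi_\epsilon$; each $f_j$ is $C^\infty$, hence $C^2$. The essential technical claim is that, for any PL $p$ on a neighborhood $V'$ of $\overline V$,
\[
\limsup_{\epsilon \to 0} \mass \D(\restrict{p^\epsilon}{V}) \leq \mass \D(\restrict{p}{V'}).
\]
Granted this, selecting $\epsilon_j \to 0$ rapidly enough that both $\|f_j - p_j\|_{L^1(V)} < 1/j$ and the mass is within $1/j$ of the limiting value for each fixed $V$ produces a $C^2$ sequence $f_j \to f$ in $L^1_{loc}$ with uniformly bounded gradient cycle mass, as required.

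The main obstacle is the mollification estimate above, and I would prove it by direct local computation based on the explicit construction of $\D(p)$ for PL $p$ from section \ref{sect:d of pl}. On the complement of an $\epsilon$-neighborhood of the $1$-skeleton of $p$, $p^\epsilon = p$ and the two masses agree. Near each edge $e$ separating gradients $v_1, v_2$, parametrizing the $\epsilon$-tube by $(s,t) \in e \times (-\epsilon,\epsilon)$ and applying the area formula gives an edge-graph area of $|e| \int_{-1}^1 \sqrt{\epsilon^2 + \alpha'(u)^2 |v_1 - v_2|^2}\, du$, where $\alpha$ is a one-dimensional profile depending only on the mollifier and satisfying $\int|\alpha'|=1$; this tends to $|e|\,|v_1 - v_2|$ as $\epsilon \to 0$, matching the edge ``fan'' contribution to $\mass \D(p)$. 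Near each vertex, rescaling by $1/\epsilon$ converts the graph area to a fixed quantity independent of $\epsilon$, which can be matched against the vertex cone contribution to $\D(p)$ by the same local identification. Summing over the finitely many simplices meeting $\overline V$ yields the claimed bound (in fact equality in the limit).
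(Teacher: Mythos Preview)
Your forward direction (strongly approximable $\Rightarrow$ PL strongly approximable) is exactly the paper's: invoke Theorem~\ref{thm:main} together with \eqref{eq:mass formula} and extract a diagonal subsequence.

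For the reverse direction the paper takes a different route: it simply cites the Brehm--K\"uhnel construction \cite{bk}, which builds $C^2$ approximations to a PL surface with controlled curvature integrals. You instead attempt a direct mollification argument. Your edge analysis is correct: the tube contribution does converge to $\length(\sigma)\,|v_1-v_2|$, matching the $D_1$ piece of $\mass\D(p)$. The gap is at the vertices. After your rescaling, the vertex contribution to $\mass\D(p^\epsilon)$ tends to $M_k:=\int_{\R^2}|\det H_{p*\phi}|\,dy$, which by the area formula equals $\int N(v)\,dv$ with $N(v)=\#\{y:\nabla(p*\phi)(y)=v\}$. The vertex contribution to $\mass\D(p)$ is $\int|w(v)|\,dv$, where $w(v)$ is the winding number of the boundary polygon of $P_k$. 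Degree theory gives only $|w(v)|\le N(v)$, so in fact $M_k\ge\mass P_k$, and your asserted inequality $\limsup_\epsilon\mass\D(p^\epsilon|_V)\le\mass\D(p|_{V'})$ goes the wrong way unless one can rule out spurious folds of the gradient map $\nabla(p*\phi)$. For convex or concave vertices this follows because mollification preserves convexity, but for saddle-type or self-intersecting vertex figures it is not evident, and ``can be matched\dots by the same local identification'' does not address it.

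This matters for the corollary because the bound must be uniform along the sequence $p_j$: if the $M_k$ are controlled only by $(\diam P_k)^2$ rather than by $\mass P_k$, the $p_j$ could have increasingly thin vertex polygons and the mollified masses could blow up while $\mass\D(p_j)$ stays bounded. Establishing the needed control is precisely the content of the Brehm--K\"uhnel result the paper invokes; your sketch does not supply it.
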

That a PL strongly approximable function must be strongly approximable follows from a construction of U. Brehm and W. K\"uhnel \cite{bk}, which (in our language) shows that any PL function $p$ of two variables may be approximated by $C^2$ functions $f_j$ in such a way that the local masses of the $\D(f_j)$ are bounded in terms of the local mass of  $\D(p)$. The other direction follows from Theorem \ref{thm:main}.

\subsection{Outline of the paper}  The basic idea of the proof of Theorem \ref{thm:main} arises from the analysis   of PL approximations to a given quadratic form $q(x,y)$ given in section \ref{sect:example}: we interpolate linearly between the values of $q$ on the vertices of a square grid to obtain a PL function $p$, and show that if the grid is aligned with the eigenvectors of $q$ then the mass of $\D(p)$ over an open subset $ U \subset \R^2$ is roughly proportional to $\int_U 1 + \norm q +\left|\det q\right|$. If, on the other hand, the grid is aligned haphazardly, then this mass is more like $\int_V 1 + \norm q +\norm q^2$, corresponding to the estimate of Proposition \ref{prop:crude}.

In order to carry this analysis over to a given $C^2$ function $f$ defined in a neighborhood of a set $V$, we attempt to impose a subdivided square grid of small mesh size so as to align with the eigenvectors of $H_f$. Since the eigenvectors are not constant this is obviously not possible, so instead we do this in approximate fashion in open sets where the oscillation of $H_f$ is small, the union of which yields a triangulated set $W$ such that $V-W$ has small area. We also take care to ensure that the square grids remain separated from each other by a distance proportional to the mesh size. This allows us to invoke a well-known (albeit apparently unpublished) result  from the theory of mesh generation, stating that this partial triangulation of $V$ may be completed to a full triangulation with a lower bound on the fatness of the triangles:
\begin{theorem}[\cite{chew}, p. 10]\label{thm:chew} Let $h>0$, and let $P \subset \R^2$ be a closed  polygonal region, and $T$ a system of vertices and edges in $P$, such that no two vertices of $ T$ are closer than $h$ and  every edge of $T$ has length  $ \in [h, h\sqrt 3]$. Then there exists a triangulation of $P$ that includes $T$ such that all edges have length  $\in [h,2h]$ and all angles $\in [\frac \pi 6, \frac {2\pi} 3]$.
\end{theorem}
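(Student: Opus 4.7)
Normalize $h = 1$. My strategy is to first enlarge the vertex set of $T$ to a maximal $1$-separated point set $V \subset P$ (which is automatically $1$-dense), then take a constrained Delaunay triangulation of $V$ with the edges of $T$ and of $\partial P$ forced.

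\textbf{Point placement.} Begin by subdividing each edge of $\partial P$, inserting vertices so that consecutive boundary vertices (including those of $V(T) \cap \partial P$) are at pairwise distance in $[1, 2]$. Call the augmented set $V_0$. Then greedily add to $V_0$ any point of $P$ at distance $\geq 1$ from the currently placed set, until no such point exists. The process terminates because $P$ is compact and the added points are mutually $\geq 1$ apart. The resulting set $V$ is $1$-separated by construction, and every point of $P$ lies strictly within $1$ of $V$ (else we would still be adding points).

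\textbf{Triangulation.} Form the constrained Delaunay triangulation of $V$ forcing the edges of $T$ and the subdivided $\partial P$. The lower edge-length bound is immediate from $1$-separation. For the upper bound, given a Delaunay triangle with circumcenter $c$ and circumradius $R$, there is some $v \in V$ with $|v - c| < 1$; if $v$ is a triangle vertex then $R < 1$, and otherwise the empty-circle property of Delaunay forces $|v - c| \geq R$, again giving $R < 1$. Hence all edges have length $\leq 2R < 2$. The law of sines $\sin \alpha = c_{\mathrm{opp}}/(2R) \geq 1/2$ (using the edge lower bound $c_{\mathrm{opp}} \geq 1$) yields $\alpha \in [\pi/6, 5\pi/6]$ for each angle; combined with the angle-sum relation $\alpha_1 + \alpha_2 + \alpha_3 = \pi$, all three angles lie in $[\pi/6, 2\pi/3]$.

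\textbf{Main obstacle.} The delicate step is reconciling the forced edges of $T$ with the Delaunay property. An edge $e$ of $T$ of length up to $\sqrt{3}$ may fail to be Gabriel: a vertex $v \in V$ at distance $\geq 1$ from both endpoints of $e$ is compatible with $|v - e_1|^2 + |v - e_2|^2 \leq 3$ by Thales, so $v$ could still lie inside the disk with $e$ as diameter. Since subdividing $e$ would produce sub-edges of length $< 1$, violating separation, the obstruction cannot be removed by refining $T$ itself. The remedy is to coordinate point placement near $e$: exclude any candidate point from inside the Gabriel disk of any edge of $T$, and insert compensating vertices along the perpendicular bisector of $e$ to preserve density. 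One must then verify that the resulting CDT incorporates $T$, that the circumradius argument still applies to triangles adjacent to forced edges (where the empty-circle property holds only through vertices visible from the triangle interior), and that the shape bounds survive. The hypothesis that vertices of $T$ are themselves at pairwise distance $\geq 1$ is essential here, since it guarantees that the protection zones around different edges of $T$ do not mutually interfere.
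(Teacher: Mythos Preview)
The paper does not prove this theorem: it is quoted from Chew's preprint \cite{chew} and used as a black box, so there is no ``paper's own proof'' to compare against. Your sketch is in the spirit of Chew's actual argument (maximal well-separated point set plus constrained Delaunay), so the overall strategy is reasonable, but as written it is not a proof.

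There are two genuine gaps. First, your circumradius bound relies on finding $v \in V$ with $|v-c|<1$ for the circumcenter $c$ of each triangle; this uses the $1$-density of $V$ in $P$, but $c$ need not lie in $P$. Near $\partial P$ and near constrained edges the circumcenter can fall outside the region (or on the far side of a forced segment, where visibility fails), and then neither branch of your dichotomy applies. Chew's algorithm handles this by a careful treatment of boundary segments and by inserting circumcenters only when they land inside, splitting encroached segments otherwise; you have not supplied that mechanism. Second, your ``Main obstacle'' paragraph correctly identifies the problem with forcing edges of $T$ of length up to $\sqrt 3$, but your proposed remedy---emptying the Gabriel disks and placing compensating vertices on perpendicular bisectors---is only asserted, not carried out. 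You yourself list what remains to be verified (that the CDT incorporates $T$, that the circumradius argument survives near forced edges, that protection zones do not interfere); until those verifications are done, the argument is incomplete. In particular, emptying the Gabriel disk of an edge of length $\sqrt 3$ removes a region of diameter $\sqrt 3 > 1$, so $1$-density is genuinely lost there and must be restored without violating $1$-separation from the endpoints---this is exactly the delicate balance that makes the $\sqrt 3$ hypothesis sharp, and it needs an explicit construction.
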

 Our estimates in the quadratic case carry over to yield the desired bounds on the mass of $\D(\restrict p W)$. Meanwhile the mass of $\D(\restrict p {V-W})$ is controlled only by the cruder bounds of Proposition \ref{prop:crude}, but since $V-W$ has small measure this quantity is also small.

 The absence of the analogue of Theorem \ref{thm:chew} in dimensions larger than 2 appears to constitute the principal obstacle to extending Theorem \ref{thm:main} to  higher dimensions.

For simplicity we will identify the dual space $\R^{2*}$ with $\R^2$ in the usual way, and conflate the differential $df$ and the gradient $\nabla f$ of a function $f$.

\subsection {Acknowledgements} We would like to thank Bob Jerrard for interesting and helpful discussions. 

\section{The gradient cycle of a PL function}\label{sect:example} 

A {\bf PL function}  on an open subset $W \subset \R^n$ is a continuous function $p$ on $W$, for which there exists a triangulation $T$ of a polyhedral set $W'\supset W$ such that the restriction of $p$ to every simplex of $T$ is affine. Such a $T$ is an {\bf underlying triangulation} for $p$. The space of all such functions is denoted $PL(W)$.

Given an oriented simplex $\phi$ of dimension $k$, we denote by $\cur \phi$ the $k$-dimensional current given by integration over $\phi$. In particular, if $\phi$ is a point (i.e. $k=0$) then $\cur \phi$ is simply evaluation at $\phi$. Such currents are clearly {\it integral currents} in the sense of \cite{gmt}, Chapter 4. The space of integral currents of dimension $k$ on a smooth manifold $M$ is denoted $\mathbb I_k(M)$.

\subsection{Construction of $\D(p)$ for $ p \in PL$} \label{sect:d of pl} We discuss only the case $n=2$.  Given a PL function $p$ defined on an open set $W \subset \R^2$, let $T$ be an underlying triangulation for $p$ consisting of relatively open simplices $\tau_i, \sigma_j, \rho_k$ of dimensions $ 2,1,0$ respectively.

Put $\nabla_i$ for the (constant) value of $dp$ on $\tau_i$, and set
 $$
 D_2 : = \sum_i \cur {\tau_i} \times \cur{\nabla_i}\in \mathbb I_2(W\times \R^2)
 $$ 
 where the orientations  of the $\tau_i$ are induced by the standard orientation of $\R^2$. Thus
  \begin{align*}
\partial D_2 &=
 \sum_i \partial \cur {\tau_i} \times \cur{\nabla_i}\\
 &= \sum_j \cur {\sigma_j} \times\left( \cur{\nabla_j^+ }- \cur {\nabla_j^-}\right)
 \end{align*}
with appropriate orientations for the edges $\sigma_j$, where $\nabla_j^+,\nabla_j^-$ are the respective values of the gradient of $p$ on the neighboring triangles lying to the left and to the right of $\sigma_j$ with respect to the given  orientation. 

Put $s_j$ for the oriented line segment from $\nabla_j^- $ to $\nabla _j^+$. Since the affine functions giving $p$ on the neighboring triangles agree along $\sigma_j$, it follows that $s_j$ is perpendicular to $\sigma_j$.
This implies that the current
$$
D_1 := -\sum_j \, \cur {\sigma_j} \times \cur {s_j}
$$
is Lagrangian, with
\begin{equation}
\partial(D_1+ D_2) =- \sum_j\, \partial \cur{\sigma_j }\times \cur {s_j} 
\end{equation}

Given a vertex $\rho_k$, let $\tau_1,\dots,\tau_{N-1},\tau_N= \tau_0$ be the triangles of $T$ having $\rho_k$ as a vertex, listed in counterclockwise order, with corresponding gradients $\nabla_1,\dots, \nabla_N= \nabla_0$. Let $\sigma_i$ be the edge between $\tau_{i-1}$ and $\tau_i$, $ i = 1,\dots,N$, oriented to point away from $\rho_k$, and $s_i$ the oriented line segment $\overline{\nabla_{i-1},\nabla_i}$. Then the part of $\partial(D_1 + D_2)$ lying above $\rho_k$ is $\cur{\rho_k}\times\sum_{i=1}^N   \cur{s_i}$. Put $P_k\subset \R^2$ for the unique bounded polygonal region-with-multiplicities with $\partial \cur{P_k } = - \sum \cur{s_i}$. 

Finally, put
$D_0 := \sum_k \cur{\rho_k} \times \cur{P_k}$. Then $D_0 + D_1 + D_2$ satisfies the hypothesis of Theorem \ref{thm:ma} for $f= p$, so $\D(p) = D_0 + D_1 + D_2$.

We remark that if $\gamma$ is any closed oriented rectifiable curve in $\R^2$ then the unique compactly supported current $T$ of dimension 2 with $\partial T = \gamma$ may be constructed as follows. Choose an arbitrary point $x_0 \in \R^2$, and define the map $F: \R \times \gamma \to \R^2$ by
$$
F(t, x) := (1-t)x_0 + t x.
$$
Then $T= F_*(\cur{0,1} \times \gamma)$, i.e. the {\it join} of $\gamma$ with $x_0$.

\subsection{PL approximation of a quadratic function}\label{sect:quadratic}

\begin{definition} \label{def:fT} Let $T$ be a finite triangulation of a polygonal planar region $W$. Given a function $f$ defined over $W$, put $f_T$ for the piecewise linear function on $W$ that agrees with $f$ at the vertices of $T$ and is affine on each triangle of $T$.
\end{definition}

\begin{definition}\label{def:complexes}
Denote by $T^*$ the triangulation of $\R^2$ with vertices given by the points of the integer lattice $\Z \times \Z$ and edges given by the sides of the squares of the lattice, together with their diagonals of slope $-1$. 
By a {\bf square mesh of size $\eps$} we mean a finite triangulation $T$ 
 that is congruent to a subcomplex of $\eps T^*$. 

The {\bf axes} of  $T$ are the images of the $x$- and $y$-axes of $T^*$ under the congruence map. 

The {\bf support} of $T$ is the union of the simplices comprising $T$.

The {\bf interior} of $T$ is the interior of the support of $T$.

The {\bf core} of $T$ is the interior of the union of the closed squares of side $\eps$ centered at the vertices of $T$ that lie in $\interior T$.

The {\bf crust} of $T$ is the union of the closed squares that intersect the boundary of $\support T$.
\end{definition}

\begin{figure}[h]
\begin{center}
\setlength{\unitlength}{240pt}
\begin{picture}(1,.76)
    \put(0,0){\includegraphics[width=\unitlength]{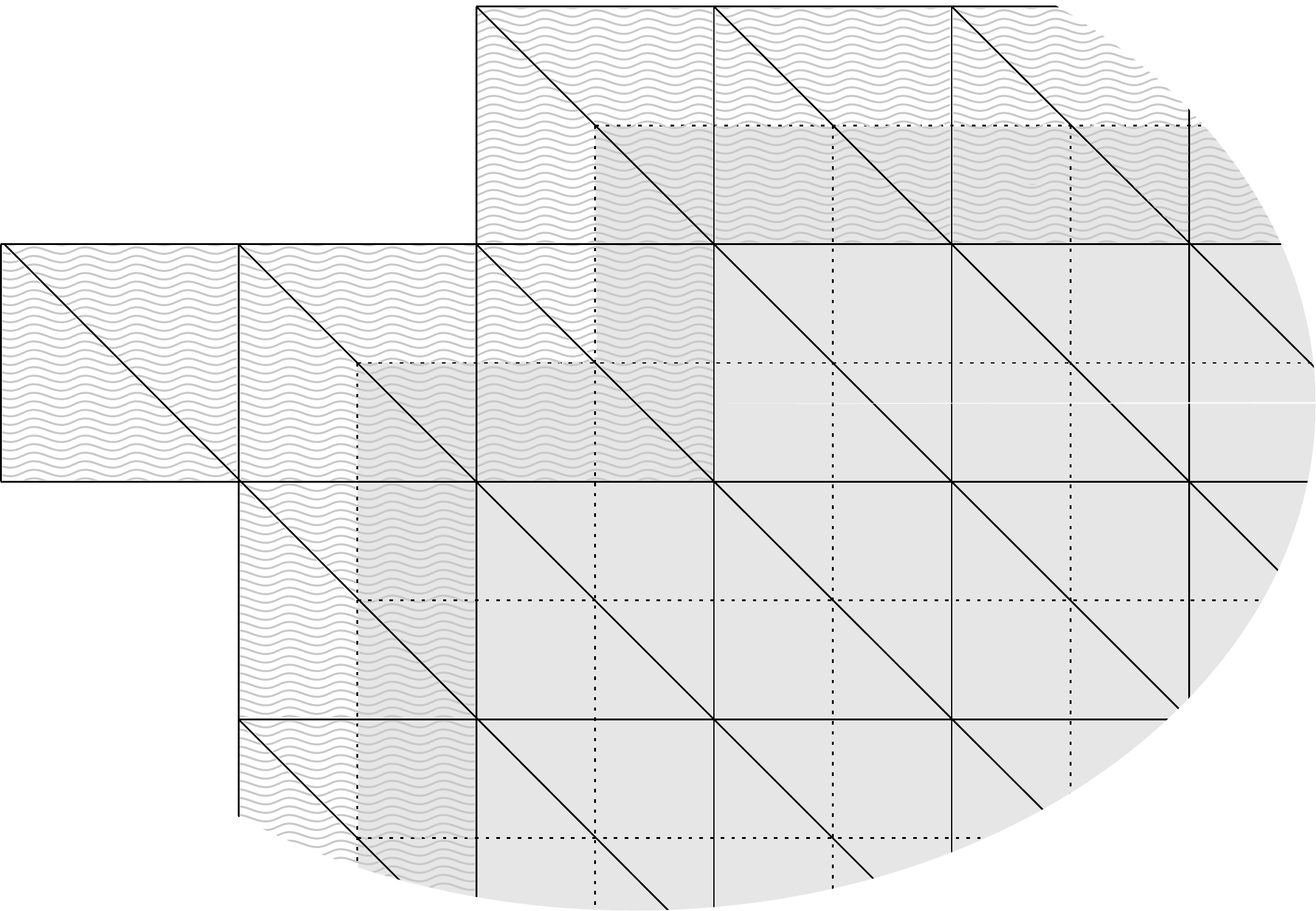}}  
  \end{picture}
\end{center}
\caption{The crust and core of part of a square mesh. The wavy region is the crust; the grey region is the core.} 
\label{fig:crust}
\end{figure}

The proof of Theorem \ref{thm:main} is based on the analysis of the following situation. Given $a,b,c \in \R$ we put
\[
q(x,y):=ax^2 + 2bxy + cy^2, \quad p:= q_{T^*}.
\]
  The differential current $\D(p) = D_0 + D_1 + D_2$ may be constructed as described in section \ref{sect:d of pl}, where each $D_i$ lives over the $i$-skeleton of $T^*$. In particular, $D_0$ is given by integration over $ \bigcup_{(m,n) \in \Z \times \Z} \{(m,n)\}\}\times P_{m,n}$, where $P_{m,n}\subset \R^{2*}$ is a polygonal region with multiplicities.  It is clear that the $P_{m,n}$ are translates of $P:= P_{0,0}$.

\begin{figure}[h]
\begin{center}
\setlength{\unitlength}{300pt}
\begin{picture}(1,0.7693005)
    \put(0,0){\includegraphics[width=\unitlength]{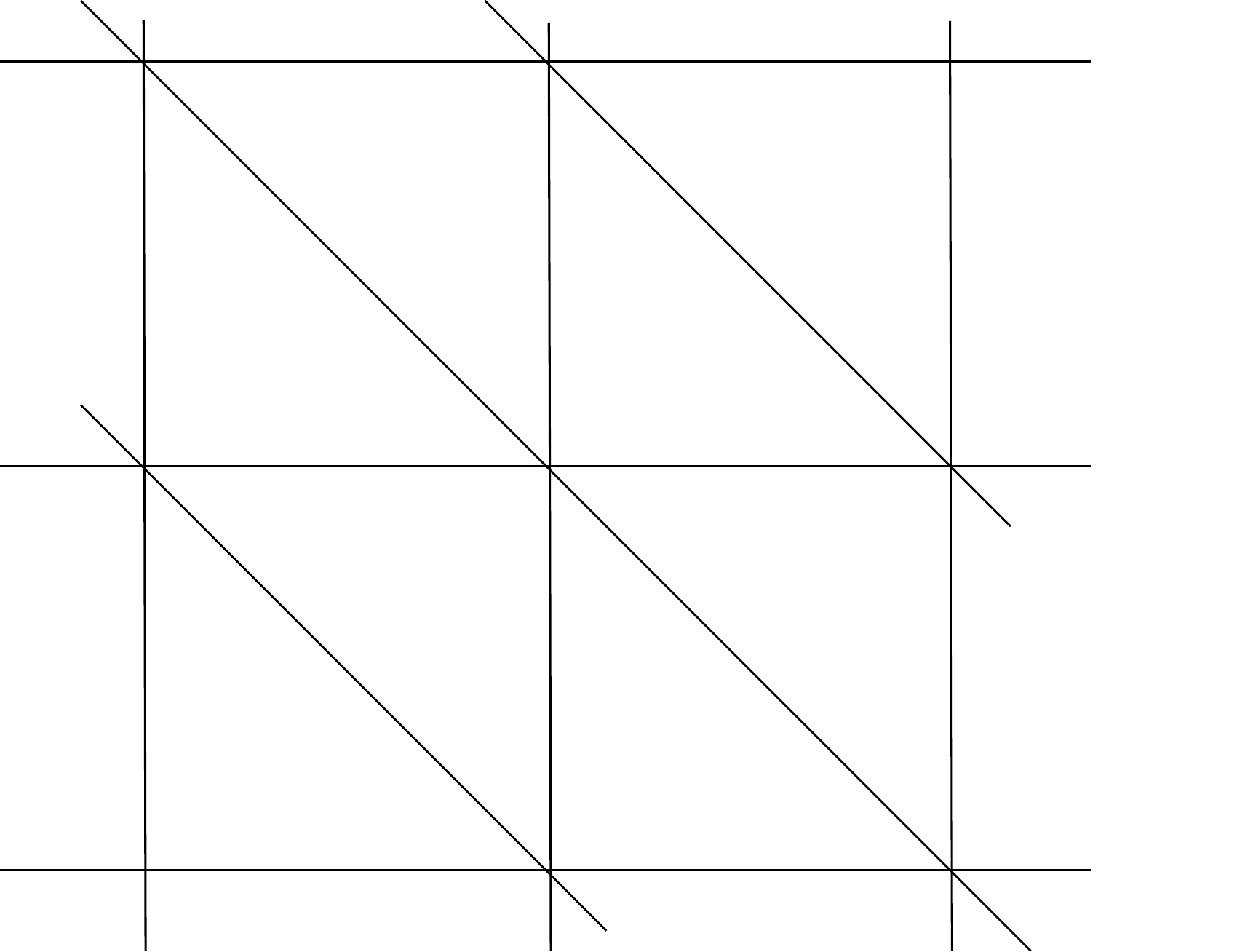}}
    \put(0.50663618,0.52356205){$\tau_1$}
    \put(0.32686205,0.62162065){$\tau_2$}
    \put(0.17977414,0.52356205){$\tau_3$}
    \put(0.32686205,0.27841546){$\tau_4$}
    \put(0.52297929,0.16401378){$\tau_5$}
    \put(0.65372413,0.27841546){$\tau_6$}
    \put(0.45760688,0.40916032){$(0,0)$}
    \put(0.45760688,0.73602238){$(0,1)$}
    \put(0.13074482,0.73602238){$(-1,1)$}
    \put(0.13074482,0.40916032){$(-1,0)$}
    \put(0.45760688,0.08229826){$(0,-1)$}
    \put(0.78446894,0.40916032){$(1,0)$}
    \put(0.78446894,0.08229826){$(1,-1)$}
  \end{picture}
\end{center}
\caption{The neighborhood of a vertex of $T^*$} 
\label{fig:grid}
\end{figure}

  We wish to compute the mass of  $P$ in terms of $a,b,c$.  We label the six triangles surrounding the origin by $\tau_1,\ldots,\tau_6$ as in Figure \ref{fig:grid}. Denoting by $\nabla_i$ the gradient of $\restrict p{\tau_i}, i = 1,\dots, 6$, 
  we compute
\begin{align}
\notag\nabla_1&=(a,c) \\
\notag\nabla_2& =(-a+2b,c) \\
\label{eq:gradients}\nabla_3&=(-a,-2b+c)\\
\notag\nabla_4&=(-a,-c)\\
\notag\nabla_5&=(a-2b,-c)\\
\notag\nabla_6&=(a,2b-c) .
\end{align} 
The boundary of the region $P$ is then the closed hexagonal path with the $\nabla_i$ as successive vertices.

In particular, if  $b=0$ then  $P$ degenerates to a rectangle with signed area  equal to $4ac =\det H_q$. Furthermore, in this case the support of $D_1$ lies above the horizontal and vertical segments of the mesh. The segments in the fiber of $D_1$ over the vertical segments have length $2|a|$, and over the horizontal segments have length $2|c|$.
 From these observations we easily draw the following conclusion.
\begin{lemma}\label{lem:aligned} Suppose $q(x,y)= ax^2 + 2bxy +cy^2 + \alpha x +\beta y + \gamma$ is a quadratic function and $T$ is a square mesh  of size $\eps$, such that the axes of  $T$ are parallel to the eigenvectors of $H_q$. Then
\begin{equation}\label{eq:mass quad}
\mass (\D(\restrict{q_T}{\core T})) \le \left( 1 + 2\sqrt 2 \norm {H_q} + |\det H_q| \right)\area({\core  T})
\end{equation}
\end{lemma}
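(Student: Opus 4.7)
The plan is to use the alignment hypothesis to reduce to the diagonal case $q(x,y) = ax^2 + cy^2$, and then to read the bound directly off the mass contributions of the three pieces $D_0,D_1,D_2$ of $\D(q_T)$ constructed in Section \ref{sect:d of pl}. First I would drop the affine-linear part $\alpha x+\beta y+\gamma$ of $q$: it does not affect $H_q$, and it translates every gradient $\nabla_i$ by the constant vector $(\alpha,\beta)$, which rigidly translates each polygon $P_k$ and each fiber segment $s_j$ without changing any area or length. Next, since $\mass \D(q_T)$, $\area(\core T)$, $\norm{H_q}$, and $|\det H_q|$ are all invariant under rigid motions of the plane, the hypothesis that the axes of $T$ are parallel to the eigenvectors of $H_q$ allows me to rotate coordinates so that $T$ is literally a subcomplex of $\eps T^*$ and $H_q$ is diagonal, i.e.\ $b=0$.

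In this reduced setting, the computations of Section \ref{sect:example} (rescaled from the unit mesh by $\eps$) give: each region $P_k$ is a rectangle of area $4|ac|\eps^2 = |\det H_q|\eps^2$; the current $D_1$ is supported only on horizontal and vertical mesh edges, with fiber segments of length $2|c|\eps$ over horizontal edges and $2|a|\eps$ over vertical edges; and $\mass(D_2\llcorner\piinv U) = \area(U)$ for any open $U \subset \support T$. I would then observe that, up to sets of measure zero, $\core T$ is the disjoint union of $N$ open $\eps\times\eps$ squares, one centered at each of the $N$ interior vertices of $T$, so $\area(\core T) = N\eps^2$; and that each such fundamental square contains exactly four mesh half-edges emanating from its central vertex (two horizontal and two vertical, each of length $\eps/2$).

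Summing the contributions square by square then gives $\mass(D_0\llcorner\piinv(\core T)) = N\cdot 4|ac|\eps^2 = |\det H_q|\,\area(\core T)$, $\mass(D_1\llcorner\piinv(\core T)) = N(\eps\cdot 2|a|\eps + \eps\cdot 2|c|\eps) = 2(|a|+|c|)\area(\core T)$, and $\mass(D_2\llcorner\piinv(\core T)) = \area(\core T)$. Since $b=0$ yields $\norm{H_q} = 2\sqrt{a^2+c^2}$, Cauchy--Schwarz gives $|a|+|c|\le\sqrt 2\,\sqrt{a^2+c^2} = \tfrac{\sqrt 2}{2}\norm{H_q}$, so $2(|a|+|c|)\le\sqrt 2\,\norm{H_q}\le 2\sqrt 2\,\norm{H_q}$, and subadditivity of mass yields \eqref{eq:mass quad}. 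The one bookkeeping point that requires care is the $D_1$ accounting for edges that meet the boundary of $\core T$, but this is straightforward: the common boundary of two adjacent fundamental squares cuts each mesh edge exactly at its midpoint, so every portion of every horizontal or vertical mesh edge lying in $\core T$ is covered by exactly one fundamental square.
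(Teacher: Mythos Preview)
Your proof is correct and follows essentially the same approach as the paper: reduce to $b=0$ by the alignment hypothesis, discard the affine part, and then verify the bound square by square using the explicit description of $D_0,D_1,D_2$ from Section~\ref{sect:example}. You have simply spelled out in detail what the paper summarizes in a sentence (``Scaling by $\eps$, the relation \eqref{eq:mass quad} clearly holds if $\core T$ is replaced by any of its constituent squares''), and in doing so you actually obtain the sharper coefficient $\sqrt 2$ rather than $2\sqrt 2$ on the $\norm{H_q}$ term.
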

\begin{proof} The addition of the linear and constant terms has no effect on the calculations above, since they yield merely a translation of the gradient cycle by the vector $(\alpha,\beta)$. Taking the axes parallel to the eigenvectors is equivalent to taking $b=0$. Scaling by $\eps$, the relation \eqref{eq:mass quad} clearly holds if $\core T$ is replaced by any of its constituent squares. 
\end{proof}

On the other hand, if the axes of $T$ are not aligned with the eigenvectors of $q$, then the masses of the $P=D_0(q,T,\rho)$ may be much larger. For example, in the case of  the degenerate quadratic form
$q(x,y) = x^2 -2xy + y^2$ we find that $P $ is as shown in Figure \ref{fig:hex}. The mass of $P$ is then equal to $8$, while $\det H_q =0$, illustrating  the fact that  the  bound of Theorem \ref{thm:main} cannot be achieved by a haphazard triangulation (as in Proposition \ref{prop:crude} below).

\begin{figure}
\begin{center}
\setlength{\unitlength}{100pt}
\begin{picture}(1,1)
    \put(0,0){\includegraphics[width=\unitlength]{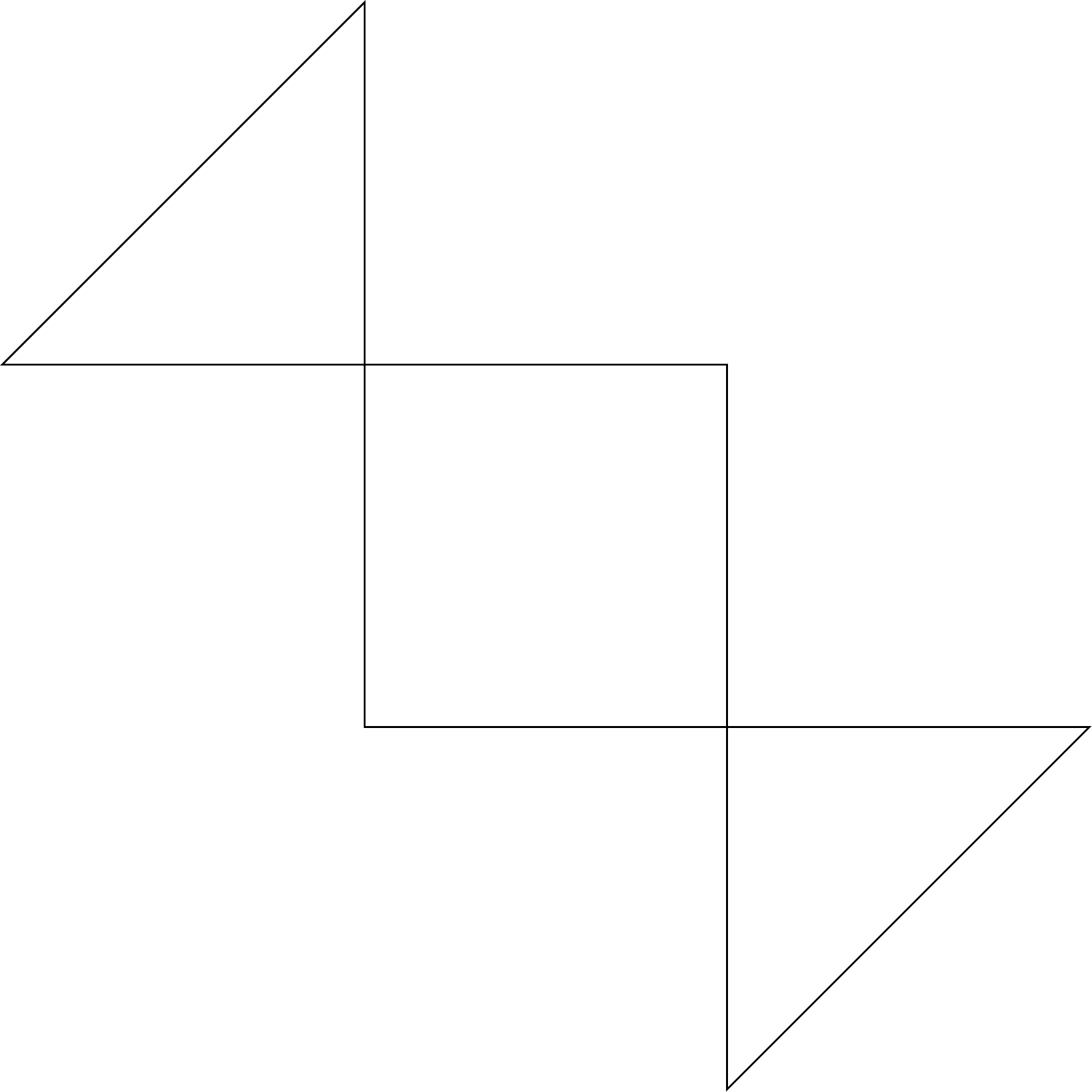}}
    \put(0.7,0.6){$\nabla_1=(1,1)$}
  
  \end{picture}
\end{center}
\caption{The image of a badly aligned triangulation} 
\label{fig:hex}
\end{figure}

Although we will not need this fact, it is interesting to note that the {\it algebraic} area of  $P$ is always precisely equal to $\det H_q$.
 
\subsection{Perturbations}\label{sect:perturb}

For a given triangle $\tau$, vertex $\rho$ and edge $\sigma$ of $T$ , we put 
\begin{align*}
D_0(f,T,\rho)&:=\D(f_T)\llcorner \piinv \rho\\
D_1(f,T,\sigma)&:= \D(f_T)\llcorner \piinv \sigma\\
\nabla_\tau f &:= \nabla(\restrict{ f_T} \tau)
\end{align*}
and 
$$
\Delta_0(f,T,\rho):= \max | \nabla_{\tau_1} { f}-\nabla_{\tau_2} { f}|
$$ 
where $\tau_1, \tau_2$ range over all triangles of $T$ incident to $\rho$, i.e. $\Delta_0(f,T,\rho)$ is the diameter of the polygon $P$ lying above $\rho$.

\begin{lemma}\label{pert} Let $T$ be a square mesh of size $\eps$ and $f,g$ functions defined on the support of $T$. Then
\begin{enumerate}
\item\label{1st conclusion} for any triangle  $\tau$  of $T$,
$$|\nabla_\tau f -\nabla_\tau g| \le4\eps^{-1} \sup|f-g|$$
\item for any interior edge $\sigma$  of $T$, 
$$
\left|\mass\,D_1(f,T,\sigma) - \mass\, D_1(g,T,\sigma) \right| \le 8 \sqrt 2\sup|f-g|
$$
\item  for any interior vertex $\rho$ of $T$, 
$$
\mass\,\left(D_0(f,T,\rho)-D_0(g,T,\rho)\right)\le 48\eps^{-1} \sup |f-g| \Delta_0(g,T,\rho)+ 96 \eps^{-2} \sup|f-g|^2
$$
\end{enumerate}
\end{lemma}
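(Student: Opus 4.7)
The plan is to prove the three parts in order, each building on the last. Part~(1) is essentially linear algebra on the right isosceles triangle $\tau$ of leg length $\eps$: the gradient of any affine function on $\tau$ is a combination of two divided differences $(h(v_i) - h(v_j))/\eps$ in orthogonal directions (for $h = f - g$). A direct estimate yields $|\nabla_\tau h| \le 2\sqrt{2}\,\eps^{-1}\sup|h|$, which implies~(1). Part~(2) then follows immediately from~(1) and the product structure $D_1(f,T,\sigma) = -\cur{\sigma} \times \cur{s_\sigma^f}$, which gives $\mass D_1(f,T,\sigma) = \length(\sigma)\cdot|\nabla_{\tau^+}f - \nabla_{\tau^-}f|$ (where $\tau^\pm$ are the two triangles adjacent to $\sigma$). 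The reverse triangle inequality reduces the difference of masses to $\length(\sigma)$ times $|\nabla_{\tau^+}(f-g)| + |\nabla_{\tau^-}(f-g)|$, which is bounded by~(1) together with $\length(\sigma) \le \eps\sqrt{2}$.

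The heart of the lemma is part~(3). Write $P^f := P_\rho^f$ and $P^g := P_\rho^g$ for the hexagonal regions with multiplicities whose boundaries are $\gamma_f, \gamma_g$, the closed polygonal paths whose successive vertices are $\nabla_{\tau_i}f$ and $\nabla_{\tau_i}g$ ($i = 1, \ldots, 6$). My plan is to identify $P^f - P^g$ with the image of a linear homotopy between these two boundary curves and then estimate its mass directly. Parametrize both hexagons by $S^1$ so that corresponding vertices match, and define
$$H: [0,1] \times S^1 \to \R^{2*}, \qquad H(t,x) := (1-t)\gamma_g(x) + t\gamma_f(x).$$
A direct computation yields $\partial H_*(\cur{0,1} \times \gamma_g) = \gamma_f - \gamma_g$. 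By the uniqueness of compactly supported 2-currents with prescribed boundary, noted at the end of Section~\ref{sect:d of pl}, this pushforward must coincide with $P^f - P^g$.

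To bound $\mass H_*(\cur{0,1}\times \gamma_g)$, I would estimate the Jacobian $J_H$ edge-by-edge. On the $i$-th edge of $\gamma_g$ (parametrized $s \in [0,1]$), $\partial_s H$ depends only on $t$ and interpolates linearly between $s_i^g$ and $s_i^f$, while $\partial_t H$ depends only on $s$ and interpolates between $\nabla_{\tau_{i-1}}(f-g)$ and $\nabla_{\tau_i}(f-g)$. By~(1) applied to $f-g$, $|\partial_t H| \le 4\eps^{-1}\sup|f-g|$; by the triangle inequality combined with~(1), $|\partial_s H| \le \max(|s_i^g|, |s_i^f|) \le \Delta_0(g,T,\rho) + 8\eps^{-1}\sup|f-g|$. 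From $|J_H| \le |\partial_s H|\cdot|\partial_t H|$ and summation over the six incident edges, the claimed bound falls out.

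The hardest point is the rigorous identification of $P^f - P^g$ with the homotopy pushforward in part~(3): because the intermediate hexagons $H(t, \cdot)$ can become non-convex or self-intersecting as $t$ varies, this pushforward is generally a 2-current with multiplicities rather than a simple region, and only the uniqueness theorem for compactly supported 2-currents with a given boundary, applied to the \emph{difference} $\gamma_f - \gamma_g$ rather than to the individual boundaries, makes the identification legitimate. Once this is secured, the Jacobian estimates and the summation over the six edges are routine.
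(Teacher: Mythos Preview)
Your treatments of parts~(1) and~(2) coincide with the paper's. For part~(3) you take a genuinely different route. The paper fixes a cone point $A$ equal to one of the six gradients at $\rho$, writes each hexagon $P^f,P^g$ as the join of $A$ with its boundary (hence as a signed sum of six triangles with common apex $A$), and then compares corresponding triangles $ABC$ and $AB'C'$ via an auxiliary elementary estimate (Lemma~\ref{lem:triangles}): moving one vertex of a planar triangle by $\delta$ changes the associated current by mass at most $\delta\cdot\diam$. Your linear homotopy replaces this discrete cone decomposition by a continuous one and bounds the mass directly via the Jacobian of the bilinear map $H$; the identification $H_*(\cur{0,1}\times\gamma_g)=P^f-P^g$ that you single out as the delicate point is indeed justified exactly by the uniqueness remark at the end of Section~\ref{sect:d of pl} (equivalently, the constancy theorem in $\R^2$). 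Your argument is slightly more conceptual and avoids the auxiliary lemma; the paper's is more hands-on.

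One numerical correction: your Jacobian bound yields, after summing over the six edges,
\[
6\cdot 4\eps^{-1}\sup|f-g|\cdot\bigl(\Delta_0(g,T,\rho)+8\eps^{-1}\sup|f-g|\bigr)
=24\,\eps^{-1}\sup|f-g|\,\Delta_0(g,T,\rho)+192\,\eps^{-2}\sup|f-g|^2,
\]
so the constants $48$ and $96$ of the stated inequality do not literally ``fall out'' of your estimate (the second constant is worse, the first is better). The form of the bound is the same, however, and this is all that is used downstream in Lemma~\ref{lem:taylor}.
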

\begin{proof} We may assume that the axes of $T$ are the $x$- and $y$-axes.

(1):  Let $\tau$ be a triangle of $T$ with vertices $(x_0,y_0), (x_0 \pm \eps, y_0), (x_0, y_0\pm \eps)$. Then
$$
\nabla_\tau f = \pm \eps^{-1}(f(x_0 \pm \eps, y_0) - f(x_0,y_0), f(x_0, y_0\pm \eps)-f(x_0,y_0))
$$
from which the conclusion follows at once.

(2): By conclusion (1), the fibers of $D_1(f,T,\sigma), D_1(g,T,\sigma)$ are line segments whose endpoints differ by at most $4\eps^{-1} \sup|{f-g}|$. Thus the lengths of these segments differ by at most twice that, and the conclusion follows since $\length \sigma \le \eps \sqrt 2$.

(3):  Given any point $A \in \R^2$, the current $D_0(f,T,\rho)$ may be expressed as the join of $A$ with the bounding polygon. This may in turn may be expressed as the sum of the joins of $A$ with each of the six edges. We take $A:=\nabla_\tau f$ for some arbitrarily chosen triangle $\tau$ incident to $\rho$. For any pair of adjacent triangles $\tau_1,\tau_2$ incident to $\rho$, put
$$
B:= \nabla_{\tau_1}f ,\ C:=\nabla_{\tau_2}f, \ B':= \nabla_{\tau_1}g ,\ C':=\nabla_{\tau_2}g
$$
We claim that 
$$
\mass(ABC- AB'C') \le 8\eps^{-1} \sup |f-g| \Delta_0(f,T,\rho)+ 16 \eps^{-2} \sup|f-g|^2
$$
This follows from the relations
\begin{align*}
\mass(ABC -ABC') &\le 4\eps^{-1} \sup |f-g| \Delta_0(f,T,\rho)\\
\mass(ABC' -AB'C') &\le  4\eps^{-1} \sup |f-g|\left(\Delta_0(f,T,\rho) +4\eps^{-1} \sup |f-g|\right)
\end{align*}
which follow in turn from conclusion \eqref{1st conclusion} and Lemma \ref{lem:triangles} below.
\end{proof}

\begin{lemma} \label{lem:triangles}Let $P,Q,R,R' \in \R^2$. Put $\Delta, \Delta'$ for the oriented triangles $PQR, PQR'$ respectively. Then
$$
\mass ( \Delta- \Delta') \le |R-R'| \diam  \Delta.
$$
\end{lemma}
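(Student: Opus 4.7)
The plan is to express $\Delta - \Delta'$ explicitly as a signed sum of two oriented triangles sharing the segment $RR'$ as a common edge, and then bound its mass by the sum of the two areas, each of which is an elementary $\tfrac{1}{2}\cdot\text{base}\cdot\text{height}$ computation with base $|R-R'|$ and height at most $\diam\Delta$.

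First I would compute the boundary of $\Delta - \Delta' = \cur{PQR} - \cur{PQR'}$. The shared edge $\cur{PQ}$ cancels, leaving
$$\partial(\Delta - \Delta') = \cur{QR} + \cur{RP} - \cur{QR'} - \cur{R'P}.$$
Then I would verify the identity
$$\Delta - \Delta' = \cur{QRR'} + \cur{PR'R}$$
by computing the boundary of the right-hand side: the internal segments $\cur{RR'}$ and $\cur{R'R}$ cancel, and after using $\cur{R'Q} = -\cur{QR'}$ and $\cur{PR'} = -\cur{R'P}$ one recovers exactly $\partial(\Delta - \Delta')$. Both sides are compactly supported 2-currents in $\R^2$ with the same boundary, and every compactly supported 2-cycle in $\R^2$ vanishes, so the identity follows.

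Given this decomposition, subadditivity of mass yields
$$\mass(\Delta - \Delta') \le \area(QRR') + \area(PR'R).$$
Each summand is a triangle with base the segment $RR'$ of length $|R-R'|$ and third vertex $Q$ or $P$. The perpendicular distance from the third vertex to the line through $R$ and $R'$ is at most $|QR|$ or $|PR|$ respectively, and each of these is bounded by $\diam\Delta$. Hence each area is at most $\tfrac{1}{2}|R-R'|\diam\Delta$, and the two contributions add up to the desired bound $|R-R'|\diam\Delta$.

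The only step requiring any real attention is pinning down the correct orientations in the decomposition $\Delta - \Delta' = \cur{QRR'} + \cur{PR'R}$; once the boundary bookkeeping is settled, the area estimates are immediate and require no case analysis, holding regardless of whether $R$ and $R'$ lie on the same or opposite sides of the line $PQ$ and regardless of degenerate configurations.
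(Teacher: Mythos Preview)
Your proof is correct and follows essentially the same route as the paper's: both decompose $\Delta-\Delta'$ into the two triangles $PRR'$ and $QRR'$ and bound each area by $\tfrac12|R-R'|\diam\Delta$. The paper phrases this set-theoretically (the symmetric difference is contained in $PRR'\cup QRR'$), whereas you write down the exact current identity $\Delta-\Delta'=\cur{QRR'}+\cur{PR'R}$ and invoke the constancy theorem; your version is slightly more careful with orientations but the geometric content is identical.
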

\begin{proof} The mass of $ \Delta- \Delta'$ is the area of the symmetric difference of the two triangles. This symmetric difference is included in the union of the triangles $PR R', QR R'$, with total area $\le \frac 1 2  (|P-R|+|Q-R|) |R-R'|$.
\end{proof}

\begin{lemma}\label{lem:taylor} Let $T$ be a square mesh of size $\eps>0$ such that $\interior T \subset B(x_0,r_0)$. Let $f \in C^2(B(x_0,r_0))$. Suppose that the axes of $T$ are parallel to the eigenvectors of $H_f(x_0)$, and that
 $ \norm{H_f(x) -H_f(x_0)}<\omega$ for $x \in B(x_0,r_0)$. Then
\begin{align*}
\mass (\D(\restrict{f_T}{\core T})) < &\int_{\core T} 1 + 2\sqrt 2 \norm{ H_f} + |\det H_f | \, dA\\
&+ \area (\core T)\times \\
&\quad \left[12\sqrt 2 \eps^{-2} \omega r_0^2 + 24  \norm {H_f(x_0)} \eps^{-2}\omega r_0^2 + 24 \eps^{-4} \omega^2 r_0^4\right.\\
& \quad \quad \quad \left.+ 2\omega\sqrt 2  + 4\norm{H_f(x_0)}\omega +2\omega^2\right]
\end{align*}
\end{lemma}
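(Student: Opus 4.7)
The strategy is to replace $f$ by its second-order Taylor polynomial $q$ at $x_0$ and combine the quadratic estimate of Lemma \ref{lem:aligned} with the perturbation bounds of Lemma \ref{pert}. Set
$$
q(x) := f(x_0) + \nabla f(x_0)\cdot(x-x_0) + \frac{1}{2}(x-x_0)^T H_f(x_0)(x-x_0),
$$
so that $H_q \equiv H_f(x_0)$. The integral form of Taylor's remainder, combined with the hypothesis $\norm{H_f(x)-H_f(x_0)}<\omega$ on $B(x_0,r_0)$, yields $\sup_{B(x_0,r_0)}|f-q| \le \frac{1}{2}\omega r_0^2$. Since the axes of $T$ are parallel to the eigenvectors of $H_q = H_f(x_0)$, Lemma \ref{lem:aligned} applies to give
$$
\mass \D(\restrict{q_T}{\core T}) \le \bigl(1 + 2\sqrt{2}\,\norm{H_f(x_0)} + |\det H_f(x_0)|\bigr)\area(\core T).
$$

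Next I would bound $\mass\D(\restrict{f_T}{\core T})$ by this quantity plus perturbation errors. Decomposing $\D(f_T) = D_2 + D_1 + D_0$ and similarly for $q_T$, I observe that $\mass D_2$ over $\core T$ equals $\area(\core T)$ regardless of the underlying function. The core contains exactly $N := \eps^{-2}\area(\core T)$ interior vertices, each of degree $6$, so at most $3N$ edges have both endpoints interior. Applying conclusion (2) of Lemma \ref{pert} with $\sup|f-q|\le \frac{1}{2}\omega r_0^2$ produces an aggregate $D_1$ discrepancy of at most $12\sqrt{2}\,\eps^{-2}\omega r_0^2\area(\core T)$, the first bracket term. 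For the $D_0$ discrepancy, conclusion (3) requires an estimate of $\Delta_0(q,T,\rho)$; the explicit gradient formulas \eqref{eq:gradients}, specialized to the aligned case $b=0$, collapse the polygon above $\rho$ to the rectangle with vertices $\eps(\pm a,\pm c)$, so $\Delta_0(q,T,\rho) = 2\eps\sqrt{a^2+c^2} = \eps\norm{H_f(x_0)}$. Summing the per-vertex bound $24\omega r_0^2\norm{H_f(x_0)} + 24\eps^{-2}\omega^2 r_0^4$ over the $N$ interior vertices yields the second and third bracket terms.

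It remains to convert the pointwise factors $\norm{H_f(x_0)}$ and $|\det H_f(x_0)|$ in the Lemma \ref{lem:aligned} bound into integrals of $\norm{H_f}$ and $|\det H_f|$ over $\core T$. The trivial estimate $\norm{H_f(x_0)} \le \norm{H_f(x)} + \omega$ contributes the $2\sqrt{2}\,\omega$ correction. For the determinant, I would use the identity
$$
\det H_f(x_0) - \det H_f(x) = b_{11}e_{22} + b_{22}e_{11} - 2b_{12}e_{12} + \det E,
$$
where $E = H_f(x_0) - H_f(x)$ and $b_{ij}$ are the entries of $H_f(x)$; the bilinear term is bounded by $\norm{H_f(x)}\norm E \le (\norm{H_f(x_0)}+\omega)\omega$ via Cauchy--Schwarz for the Hilbert--Schmidt inner product, while $|\det E| \le \frac{1}{2}\norm E^2 \le \frac{1}{2}\omega^2$. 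These corrections fit comfortably inside the $4\norm{H_f(x_0)}\omega + 2\omega^2$ allowance, with enough slack to convert the accumulated non-strict estimate into the strict inequality of the conclusion. The only step requiring genuine geometric input is the identification $\Delta_0(q,T,\rho)=\eps\norm{H_f(x_0)}$ in the aligned case; the remainder is careful bookkeeping of the error contributions.
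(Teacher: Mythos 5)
Your proposal is correct and follows essentially the same route as the paper: replace $f$ by its second-order Taylor polynomial $q$ at $x_0$ with $\sup|f-q|\le\frac12\omega r_0^2$, apply Lemma \ref{lem:aligned} to $q_T$ over $\core T$, sum the per-edge and per-vertex perturbation bounds of Lemma \ref{pert} using $\Delta_0(q,T,\rho)=\eps\norm{H_q}$ and the count of $\eps^{-2}\area(\core T)$ squares, and finish with the elementary comparison of $\norm{H_q}$, $\det H_q$ with $\norm{H_f(x)}$, $\det H_f(x)$. Your explicit Cauchy--Schwarz treatment of the determinant difference merely fills in what the paper states as an ``elementary estimate,'' landing within the same $4\norm{H_f(x_0)}\omega+2\omega^2$ allowance.
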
 
\begin{proof} Put $q$ for the 2nd order Taylor polynomial of $f$ at $x_0$, so that $H_q\equiv H_f(x_0)$. Then 
$$
\sup_U |f-q| < \frac {\omega   r_0^2}2
$$
There are $\eps^{-2} \area(\core T)$ constituent squares  in $\core T$, each of which contains one vertex and six half-edges of $T$. Referring to \eqref{eq:gradients}, it is clear that
$$
\Delta_0(q,T,\rho) \le \norm{H_q} \eps
$$
for every  vertex $\rho$ lying in $\core T$.
 Thus by Lemmas \ref{lem:aligned} and \ref{pert}, 
\begin{align}
\notag\mass(\D(\restrict {f_T}{\core T})) \le &\mass (\D(\restrict {q_T}{\core T})) \\
\notag &+ \area (\core T)\left[12\sqrt 2 \eps^{-2} \omega r_0^2 + 24  \norm {H_q} \eps^{-2}\omega r_0^2 + 24 \eps^{-4} \omega^2 r_0^4\right]\\
\le &\area({\core T})\times\\
\notag& \left( 1 + 2\sqrt 2 \norm {H_q} + |\det H_q| +\right.\\
\notag&
\left.12\sqrt 2 \eps^{-2} \omega r_0^2 + 24  \norm {H_q} \eps^{-2}\omega r_0^2 + 24 \eps^{-4} \omega^2 r_0^4\right)
\end{align}
The result now follows from the elementary estimate
$$
|\det H_f(x) -\det H_q| < 4\norm{H_q}\omega + 2\omega^2.
$$
for $x \in \interior T$.
\end{proof}

\subsection{A crude bound for PL approximations over fat triangulations} \label{sect: fat}
Put $\Theta(T)$ for the smallest angle of any triangle of $T$. Thus $\Theta(T)$ is a measure of the ``fatness" of $T$. We put also $M(T)$ for the length of the longest edge of $T$, and $m(t)$ for the length of the shortest edge.

\begin{proposition}\label{prop:crude} Given $\Theta \in (0,\frac \pi 3)$ and $k \in (0,1)$  there exists a constant $C=C (\Theta,k)$ with the following property. Let $T$ be a finite triangulation of an open polygon $W\subset \R^2$, with
$$
\Theta (T )\ge \Theta, \quad \frac{m(T)}{M(T)} \ge k.
$$ 
Then for any $f \in C^2(W)$ 
$$
\mass~ \D(f_T) \le C \area(W)\left( 1 + \sup_W{\norm{H_f}}+  \sup_W{\norm {H_f}}^2\right)
$$
\end{proposition}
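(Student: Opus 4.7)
The plan is to decompose $\D(f_T) = D_0 + D_1 + D_2$ as in section \ref{sect:d of pl} and bound the mass of each summand separately. Since $D_2$ projects bijectively under $\pi$ to $W$, we immediately get $\mass D_2 = \area(W)$, supplying the constant term.

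The central ingredient for the remaining two pieces is a standard finite-element estimate for the error of the PL gradient at a vertex. For a triangle $\tau$ with vertex $v_0$ and emanating edges $e_1, e_2$, the affine interpolant satisfies $\nabla_\tau f \cdot e_i = f(v_0+e_i) - f(v_0)$, and second-order Taylor expansion yields $|(\nabla_\tau f - \nabla f(v_0)) \cdot e_i| \le \tfrac{1}{2} \sup_W \norm{H_f} \cdot |e_i|^2$. The matrix with rows $e_1, e_2$ has $|\det| = |e_1||e_2|\sin\alpha \ge m(T)^2 \sin\Theta$ (the angle $\alpha \ge \Theta$ at $v_0$) and operator norm $\le 2 M(T)$, so inversion gives
\begin{equation*}
|\nabla_\tau f - \nabla f(v_0)| \le C_1(\Theta, k) \, \sup_W \norm{H_f} \cdot M(T).
\end{equation*}

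For $D_1$: two triangles sharing an edge $\sigma$ also share a common vertex, so the triangle inequality bounds the gradient jump $|\nabla_{\tau_1} f - \nabla_{\tau_2} f| \le 2 C_1 \sup_W \norm{H_f} M(T)$; this is the length of the fiber segment of $D_1(f, T, \sigma)$. Hence $\mass D_1(f, T, \sigma) \le 2 C_1 \sup_W \norm{H_f} M(T) \cdot \length(\sigma) \le 2 C_1 \sup_W \norm{H_f} M(T)^2$. Since each triangle of $T$ has area at least $\tfrac{1}{2} k^2 M(T)^2 \sin\Theta$, the triangulation contains $O(\area(W)/M(T)^2)$ edges, and summing produces the linear term. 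For $D_0$: all vertices of the polygon $P_\rho$ lie within $C_1 \sup_W \norm{H_f} M(T)$ of $\nabla f(\rho)$, so $\Delta_0(f, T, \rho) \le 2 C_1 \sup_W \norm{H_f} M(T)$. The join construction at the end of section \ref{sect:d of pl} with basepoint $\nabla_{\tau_1} f$ writes $P_\rho$ as a signed sum of at most $2\pi/\Theta$ triangles, each of area $\le \tfrac{1}{2} \Delta_0^2$, so $\mass D_0(f, T, \rho) \le C_2(\Theta) \sup_W \norm{H_f}^2 M(T)^2$. Summing over the $O(\area(W)/M(T)^2)$ vertices gives the quadratic term, completing the proof.

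I do not anticipate any serious obstacle; the main care needed is in tracking that both hypotheses---the angle bound $\Theta$ and the edge-ratio bound $k$---enter the constants through the determinant lower bound $m(T)^2 \sin\Theta$ (controlling gradient errors and minimum triangle area), and that the combinatorial counts of edges, vertices and triangles incident to a vertex depend only on $\Theta$ and $k$.
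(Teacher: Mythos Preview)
Your proposal is correct and follows essentially the same route as the paper: decompose $\D(f_T)=D_0+D_1+D_2$, bound the gradient jump across adjacent triangles by $O(\sup\norm{H_f}\,M(T))$, and convert this into area-proportional bounds on $\mass D_1$ and $\mass D_0$ via combinatorial counts of edges and vertices.

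The one noteworthy difference is in how the gradient estimate is obtained. You use a finite-element argument (Taylor expansion plus inversion of the edge matrix), which already consumes both hypotheses $\Theta$ and $k$ through the determinant bound $m(T)^2\sin\Theta$. The paper instead observes, via the mean value theorem, that $\nabla_\tau f$ lies in the convex hull of $df(\tau)$; this gives $|\nabla_{\tau_1}f-\nabla_{\tau_2}f|\le 2\sup\norm{H_f}\,M(T)$ directly, with no dependence on $\Theta$ or $k$ at that stage. In the paper's version the fatness and edge-ratio hypotheses enter only in the combinatorial counting (number of edges, vertices, and faces per vertex). Both arguments work; the paper's is slightly more economical and makes clearer exactly where each hypothesis is used.
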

\begin{proof} We decompose $\D(f_T) = D_0 + D_1 + D_2$ as above. Clearly 
\begin{equation}\label{eq:mass 2}
\mass~D_2 = \area W
\end{equation}

 By the mean value theorem, the gradient  $\nabla_\tau$ of $f_T$ on a given triangle $\tau \in T$ lies in the convex hull of the image of $\tau$ under $df$. It follows that if the triangles $\tau_1, \tau_2\in T$ share an edge or a vertex then 
 \begin{equation}\label{eq:diam}
 |\nabla_{\tau_1} - \nabla_{\tau_2}| \le 2 \sup {\norm{H_f} }M( T)
 \end{equation}
   Therefore by the construction section of \ref{sect:d of pl}
\begin{equation*}
\mass~ D_1 \le 2\sup \norm{H_f} M(T) \sum _j\length(\sigma_j)  
\end{equation*}
where the sum is taken over all edges $\sigma_j$ of $T$. Since clearly
$$
m(T) \sum \length (\sigma_j) \le\frac 6{\sin \Theta} \area W
$$
we conclude that 
\begin{equation}\label{eq:mass 1}
\mass~ D_1 \le \frac{12}{\sin \Theta} \frac{M}{m}  \area W \sup \norm{H_f}
\end{equation}

Clearly the number of triangles (or edges) incident to any given vertex $\rho$ of $T$ is no greater than $C':=\frac{2\pi}{\Theta(T)}$. By the construction of section \ref{sect:d of pl}, the fiber of $D_0$ over $\rho$ is a polygon $P$ with multiplicities and with at most $C'$ edges. It follows that these multiplicities are at most $\frac {C'} 2= \frac \pi{\Theta(T)}$, so that by \eqref{eq:diam}
\begin{equation*}
\mass~D_0 \le \frac {4\pi^2}{\Theta(T)}   \sup{\norm{H_f}}^2 M(T)^2 V
\end{equation*}
where $V$ is the number of  vertices in $T$. Putting $E$ for the number of  edges of $T$, clearly $2E \ge   V $. Therefore
$$
\frac{m(T)^2} 2 V\le m(T)^2 E \le \frac 6 {\sin\Theta} \area W
$$
and we obtain
\begin{equation}\label{eq:mass 0}
\mass~ D_0 \le \frac{48\pi^2}{\Theta \sin \Theta}\frac{M(T)^2}{m(T)^2} \area W \sup \norm{H_f} ^2
\end{equation}
The conclusion now follows if we take $C:=\frac{48\pi^2}{k^2\Theta \sin \Theta}$.
\end{proof}

\section{Proof of Theorem \ref{thm:main}}
Givem $n \in \mathbb N$ we put $W_n$ for the open polygon consisting of the interior of the union of all closed squares $S = S_{a,b}= [\frac a n,\frac {a+1} n]\times [\frac b n,\frac {b+1} n], \ a,b \in \Z$, that intersect $V$. It is clear that $V\subset W_n \subset U$ for $n$ sufficiently large, and that 
\begin{equation}\label{eq:areas}
\lim_{n\to \infty}\area W_n = \area V
\end{equation} 

For $n,j \in \mathbb N$, consider the triangulation $T_{n,j} $ of $W_n$ constructed as follows. Given a  square $S$ of $W_n$, let $T_{S,j}$ be a square mesh within $S$ that 
\begin{itemize}
\item has  size $\frac 1{nj}$ 
\item has axes parallel to the eigenvectors of $H_f(x)$, where  $x$ is the center of $S$, and
\item is maximal with respect to the condition that no vertex of $T_S$ lies within distance $\frac 1 {nj}$ of the complement of $S$
\end{itemize} Let $T_{n,j}'$ be the system of vertices and edges in $W_n$ consisting of all  vertices and edges of the $T_{S,j}$, together with the edges and vertices obtained by subdividing each edge of every square $S$ into $j$ equal segments of length $\frac 1 {nj}$. This system satisfies the conditions of Theorem \ref{thm:chew}, and we take $T_{n,j}$ to be the triangulation of $W_n$ in the conclusion of that statement.

Referring to Definition \ref{def:complexes}, put  $V_{n,j}:= \bigcup_S \core {T_{S,j}}$. Put $Z_{n,j}$ for the interior of the union of $\bigcup_S \crust T_{S,j}$ with all of the additional triangles in $T_{n,j}$ that were obtained by means of Theorem \ref{thm:chew}. Thus $V_{n,j} \cup Z_{n,j} =W_n \supset  V$. Furthermore, each $S-S\cap Z_{n,j}$ includes a square of side $\frac 1 n(1 - \frac{4\sqrt 2}{j})$. It follows that
 \begin{equation}
\area(Z_{n,j}) \le \frac{8\sqrt 2}{j}\area(W_n).
 \end{equation}

 Now put $p_{n,j}:= f_{T_{n,j}}$. It is clear that $p_{n,j} \to f$ locally uniformly as $ n,j \to \infty$.
 We will show that there are $n_1< n_2< \dots $ such that the sequence PL functions $p_{n_j,j}$ satisfies the relation \eqref{eq:main} in the statement of Theorem \ref{thm:main}.

Let $U'$ be another open set such that $V\csubset U'\csubset U$. We suppose from now on that $n$ is large enough that
 $W_n\subset U'$.

 In view of \eqref{eq:areas}, it follows from Proposition \ref{prop:crude} that
\begin{equation}\label{eq:small}
\mass(\D(\restrict{p_{n,j}}{Z_{n,j}}) )\le C \area(Z_{n,j}) (1 + \sup_{U'}\norm{H_f} + \sup_{U'}\norm{H_f}^2)\to 0
\end{equation}
as $n,j \to \infty$,
where $C= C(\frac \pi 6, \frac 1 2)$.

\begin{figure}[h]
\begin{center}
\setlength{\unitlength}{150pt}
\begin{picture}(1,1)
\put(0,0){\includegraphics[width=\unitlength]{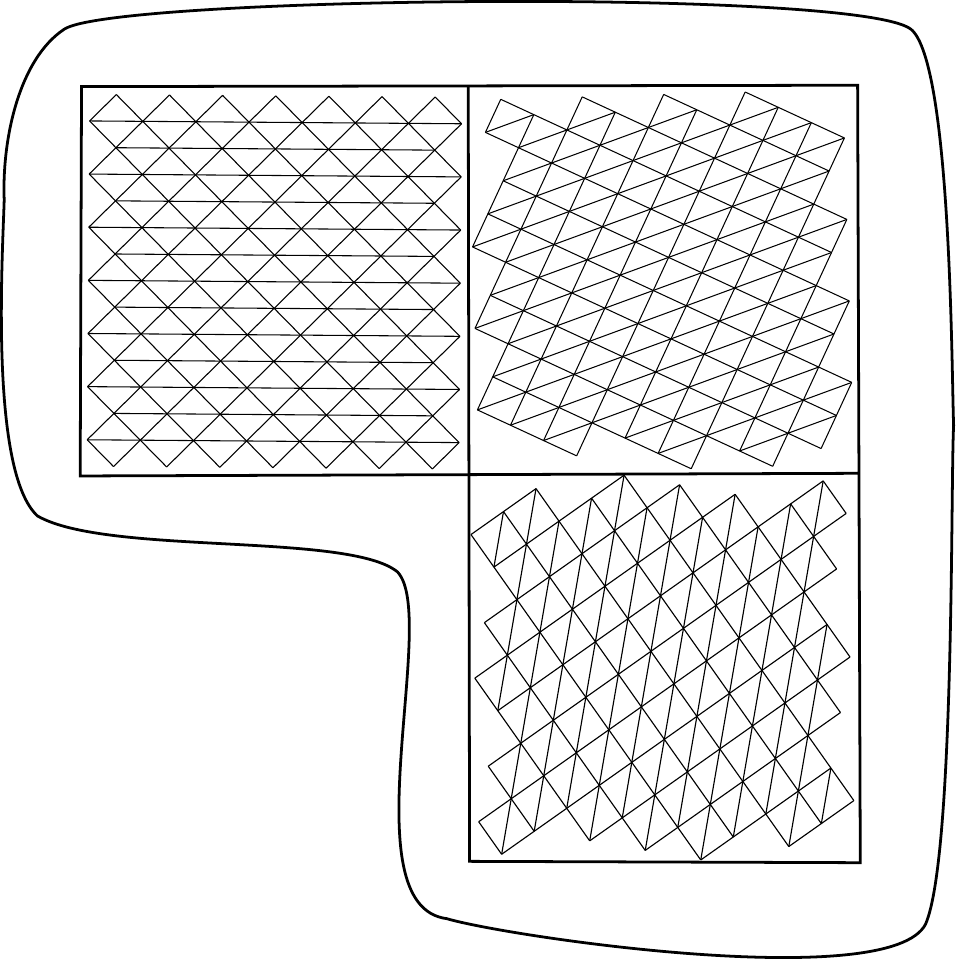}}
\end{picture}
\end{center}
\caption{A triangulation for most of the region} 
\label{fig:region}
\end{figure}

Since $V\subset V_{n,j} \cup Z_{n,j}$,
\begin{equation*}
\mass(\D(\restrict{p_{n,j}}{V})) \le \mass(\D(\restrict{p_{n,j}}{V_{n,j}})) +\mass(\D(\restrict{p_{n,j}}{Z_{n,j}}))
\end{equation*}
Thus by \eqref{eq:small} we may complete the proof by showing that for fixed $j$
\begin{equation}
\label{eq:big term}\limsup_{n\to \infty} \mass(\D(\restrict{p_{n,j}}{V_{n,j}}))< \int_{V} 1 + 2\sqrt 2 \norm{ H_f} + |\det H_f | \, dA
\end{equation}

Put $L:= \sup_{U'}\norm{H_f}$. Let $\omega$ be a modulus of continuity for $\restrict{H_f}{U'}$, i.e. a monotonically increasing function with $\lim_{r\downarrow 0} \omega(r) = 0$, such that
$$
x,y \in U', |x-y| < r \implies \norm{H_f(x)-H_f(y)} < \omega(r).
$$
Applying Lemma \ref{lem:taylor} with $r_0 = \frac 1{n\sqrt 2}, \omega = \omega_n:=\omega\left(\frac 1{n\sqrt 2}\right)$ and $\eps = \frac 1 {nj}$, we obtain
\begin{align*}\label{eq:square estimate}
\mass (\D(\restrict {p_{n,j}}{V_{n,j}}) )&< \int_{V_{n,j}} 1 + 2\sqrt 2 \norm{H_f} + \left|\det H_f\right| \, dA \\
\notag&+\area(V_{n,j})\left[(6 \sqrt 2 + 12)j^2 \omega_n + 6 j^4 \omega_n^2 + (2\sqrt 2 + 2L)\omega_n + 2\omega_n^2 \right]
\end{align*}
The dominated convergence theorem implies that the integral converges to the corresponding integral over $V$ as $n\to \infty$, while the second term converges to zero.
This concludes the proof.


\begin{thebibliography}{50}
\bibitem{bk} Brehm, U. \& K\"uhnel, W. Smooth approximation of polyhedral surfaces regarding curvatures. {\it Geom. Dedicata} \textbf{12} (1982), 435--461 
\bibitem{cms} Cheeger, J., M\"uller, W., \& Schrader, R. On the curvature of piecewise flat spaces. {\it Comm. Math. Phys. } \textbf{92} (1984), 405--454
\bibitem{chew} Chew, L.P. Guaranteed quality triangular meshes. Preprint, 1989. Available at http://www.cs.sandia.gov/~samitch/unm\_math\_579/course\_papers/chew\_guaranteed\_triangular\_89-983.pdf
\bibitem{gmt} Federer, H. {\it Geometric Measure Theory}. Springer, Berlin and Heidelberg, 1969.
\bibitem{fu89} Fu, J.H.G. Monge-Amp\`ere functions I. {\it Indiana U. Math. J.} \textbf{38} (1989), 745--771
\bibitem{fu93} Fu, J.H.G. Convergence of curvatures in secant approximations. {\it J. Differential Geom.} \textbf{37} (1993), 177--190 .
\bibitem{fu11} Fu, J.H.G.  An extension of Alexandrov's theorem on second derivatives of convex functions.   {\it Adv. Math.}
 \textbf{228} (2011), 2258--2267
\bibitem{jerrard1} Jerrard, R. L. Some remarks on Monge-Amp�re functions. \textit{Singularities in PDE and the calculus of variations}, 89--112. CRM Proc. Lecture Notes, \textbf{44}. Amer. Math. Soc., Providence, RI, 2008. 
\bibitem{jerrard2} Jerrard, R.L. Some rigidity results related to Monge-Amp�re functions. {\it Canad. J. Math.} \textbf{62} (2010), 320--354
\bibitem{morvan} Morvan, J.-M. \textit{Generalized Curvatures}. Springer, Berlin and Heidelberg, 2008
\end{thebibliography}
\end{document}